\newcommand{\C}{\mathbb{C}}
\newcommand{\Z}{\mathbb{Z}}
\newcommand{\floor}[1]{\lfloor #1 \rfloor}
\newcommand{\minimatrix}[4]{\begin{bmatrix} #1 & #2 \\ #3 & #4 \end{bmatrix}  }
\theoremstyle{plain}
\newtheorem{Lemma}{Lemma}
\theoremstyle{definition}
\begin{document}

    \title[On the matrix equation $XA+AX^T =0$, II]{On the matrix equation $XA+AX^T =0$, II:\\Type 0-I interactions}

    \author[Alice Chan]{Alice Zhuo-Yu Chan}

    \author[L.A.~Garcia German]{Luis Alberto Garcia German}

    \author{Stephan Ramon Garcia}
    \email{Stephan.Garcia@pomona.edu}
    \urladdr{\url{http://pages.pomona.edu/~sg064747}}

    \author{Amy L. Shoemaker}

    \address{   Department of Mathematics\\
            Pomona College\\
            Claremont, California\\
            91711 \\ USA}

    \keywords{Congruence, congruence canonical form, Lie algebra, binomial coefficient, Pascal's Triangle, generating function.}
    \subjclass[2010]{15A24, 15A21}

    \thanks{Partially supported by National Science Foundation Grants DMS-1001614 and DMS-1265973.}

    \begin{abstract}
	The matrix equation $XA + AX^T = 0$ was recently introduced by
	De Ter\'an and Dopico (Linear Algebra Appl. {\bf 434}~(2011), 44--67) to study the dimension of congruence
	orbits.  They reduced the study of this equation to a number of special cases, several of which have not been
	explicitly solved.  In this note we obtain an explicit, closed-form solution in the difficult Type 0-I interaction case.
    \end{abstract}

\maketitle
 
%
%
%
%
\section{Introduction}

	The matrix equation
	\begin{equation}\label{eq-DTD}
		XA+AX^T = 0,
	\end{equation}
	where $A$ in $M_n(\C)$ is fixed and $X$ is unknown, was introduced in 2011 by F.~De Ter\'an and F.M.~Dopico
	to study the dimensions of congruence orbits.  In this setting, the codimension of the congruence orbit of $A$ is given by 
	the number of free parameters in the solution of \eqref{eq-DTD}.  More recently, the equation \eqref{eq-DTD}
	has also attracted the attention of a number of other authors \cite{Dmytryshyn, Dmytryshyn2, Dmytryshyn3, OMEXAAX}.		
	
	Now observe that for each fixed $A$, the solution space to \eqref{eq-DTD} is
	a Lie algebra, denoted $\mathfrak{g}(A)$, equipped with Lie bracket $[X,Y] =XY - YX$.   Building upon the initial 
	work of De Ter\'an and Dopico, a complete description of such twisted matrix Lie algebras is almost at hand.  
	The computations in \cite{DTD} provide explicit descriptions, up to similarity, of all possible twisted matrix Lie algebras
	except in a few highly problematic cases.  In \cite{OMEXAAX}, the third and fourth authors provided an explicit description of the solution set in
	one of the most difficult cases.  

	We are concerned here with the particularly troublesome setting where $A$ is the direct sum of a so-called 
	Type 0 and Type I canonical matrix (see \cite[Thm.~4.5.25]{HJ}, \cite[Sect.~2]{DTD}).
	These are matrices of the form $J_p(0)\oplus \Gamma_q$, where $J_p(0)$ is the $p \times p$ 
	Jordan matrix with eigenvalue $0$ and
	$\Gamma_q$ is the $q\times q$ matrix
	\begin{equation*}
		\Gamma_q :=\small
		\begin{bmatrix}
			0&0&0&\cdots &0& (-1)^{q+1}\\
			0&0&0&\cdots &(-1)^q &(-1)^q\\
			\vdots&\vdots & \vdots & \ddots &\vdots &\vdots\\
			0&0&1&\cdots &0&0\\
			0&-1&-1&\cdots &0&0\\
			1&1&0&\cdots &0 &0
		\end{bmatrix}.
	\end{equation*}
	
	We aim here to provide an explicit, closed form solution to the matrix equation
	\begin{equation}\label{eq-Main}	
		X\big(J_p(0)\oplus \Gamma_q\big)  +  \big(J_p(0)\oplus \Gamma_q\big) X^T=0,
	\end{equation}
	thereby obtaining a simple matricial description of the Lie algebra $\mathfrak{g}\big(J_p(0)\oplus \Gamma_q\big)$.
	For even $p$, it is not difficult to show that
	\begin{equation*}
		\mathfrak{g}(J_p(0)\oplus \Gamma_q)=\mathfrak{g}\big(J_p(0)\big)\oplus \mathfrak{g}(\Gamma_q),
	\end{equation*}
	where the Lie algebras $\mathfrak{g}(J_p(0))$ and $\mathfrak{g}(\Gamma_q)$ are elegantly described
	in \cite[p.~52, p.~54]{DTD}.   For odd $p$, however,
	the situation is much more complicated.  Indeed, symbolic computation reveals
	that the matrices which arise are bewilderingly complex and typographically unwieldy.
	The consideration of this troublesome case takes up the bulk of this article.
	
	Besides providing a major step in the description of twisted matrix Lie algebras, our approach
	is notable for the use of a number of combinatorial identities.  In fact, we conclude the paper by outlining
	an alternate description of the solutions to \eqref{eq-Main} in which a variety of intriguing combinatorial quantities emerge.

\section{Solution}
	
	We begin our consideration of the matrix equation \eqref{eq-Main} by partitioning the unknown $(p+q)\times (p+q)$ matrix
	$X$ conformally with the decomposition of $J_p(0)\oplus \Gamma_q$.  In other words, we write
	\begin{equation}\label{eq-X}
		X = \minimatrix{A}{B}{C}{D},
	\end{equation}
	where $A$ is $p \times p$, $B$ is $p \times q$, $C$ is $q\times p$, and $D$ is $q\times q$.  We should pause here to
	remark that the submatrix $A$ just introduced is not the same as the fixed matrix $A$ which appears in the general equation \eqref{eq-DTD},
	which plays no part in what follows.
	
	We next substitute the partitioned matrix \eqref{eq-X} into \eqref{eq-Main} to obtain the system
	\begin{align}
		 AJ_p(0)+J_p(0)A^T &=0, \label{eq-A} \\
		D\Gamma_q+ \Gamma_q D^T &= 0,  \label{eq-D}\\
		B\Gamma_q + J_p(0)C^T &=0, \label{eq-BC1}\\
		CJ_p(0)+\Gamma_q B^T&=0, \label{eq-BC2}
	\end{align}
	of matrix equations which, taken together, are equivalent to \eqref{eq-Main}.  Fortunately,
	equations \eqref{eq-A} and \eqref{eq-D} are easily
	dispatched, since the explicit solutions in these cases have already been described by De Ter\'an and Dopico \cite[p.~52, p.~54]{DTD}.
	
	It therefore remains to characterize the matrices $B$ and $C$ which satisfy \eqref{eq-BC1} and \eqref{eq-BC2}.  This is by far the most
	difficult and involved portion of our work, although our task is made somewhat easier because 
	$B=C=0$ when $p$ is even \cite[p.~60]{DTD}.
	
	In the following, we assume that $p$ is odd.
	Solving for $B$ in \eqref{eq-BC2} yields
	\begin{equation}\label{eq-solveB}
		B=-J_p(0)^TC^T\Gamma_q^{-T}
	\end{equation}	
	and substituting this into \eqref{eq-BC1} gives
	 \begin{equation}\label{eq-C}
		 J_p(0)C^T= J_p(0)^TC^T\Gamma_q^{-T}\Gamma_q.
	 \end{equation}
	 In fact, it is easy to see that the system composed of \eqref{eq-BC1} and \eqref{eq-BC2}, and the system
	 composed of \eqref{eq-solveB} and \eqref{eq-C} are equivalent.  We have already established one direction.  
	 It therefore 
	 suffices to show that if the pair $(B,C)$ solves \eqref{eq-solveB} and \eqref{eq-C},
	 then $(B,C)$ must also solve \eqref{eq-BC1} and \eqref{eq-BC2}.  Solving \eqref{eq-C} for
	 $J_p(0)^T C^T \Gamma_q^{-T}$ and substituting the result into \eqref{eq-solveB}
	 yields \eqref{eq-BC1}.  Taking the transpose of \eqref{eq-solveB} immediately provides \eqref{eq-BC2}.
	 
	 Before proceeding any further, we require a few preliminary lemmas.
	The following fact is well-known amongst those familiar with the
	congruence canonical form (see \cite[p.~1016]{HS1} or \cite[p.~215]{HSCM}).
		
	\begin{Lemma}\label{LemmaCosquare}
		The cosquare $\Gamma_q^{-T} \Gamma_q$ of $\Gamma_q$ is given by
		\begin{equation}\label{eq-GGC}
			\Gamma_q^{-T} \Gamma_q = (-1)^{q+1}\Lambda_q,
		\end{equation}
		where
		\begin{equation}\label{eq-Cosq}
			\Lambda_q :=
			\begin{bmatrix}
				1 & 2 & 2 & \cdots & 2 \\
				0 & 1 & 2 & \cdots  & 2\\
				0 & 0 & 1 & \cdots & 2 \\[-3pt]
				\vdots & \vdots & \vdots & \ddots & \vdots \\
				0 & 0 &0 & \cdots & 1
			\end{bmatrix}.
		\end{equation}
	\end{Lemma}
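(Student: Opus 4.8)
The plan is to avoid inverting $\Gamma_q$ altogether. Since the claimed identity $\Gamma_q^{-T}\Gamma_q = (-1)^{q+1}\Lambda_q$ is, after multiplying on the left by the transpose $\Gamma_q^T$ and using $((-1)^{q+1})^2 = 1$, equivalent to
\begin{equation*}
	\Gamma_q^T\Lambda_q = (-1)^{q+1}\Gamma_q,
\end{equation*}
and since $\Gamma_q$ is invertible (its entries vanish strictly above the main anti-diagonal and every anti-diagonal entry equals $\pm1$, so that $\det\Gamma_q = \pm1$), it suffices to verify this reformulated identity, which involves no inverse at all. First I would record the explicit entries of the matrices involved: the nonzero entries of $\Gamma_q$ lie exactly on the two anti-diagonals $i+j\in\{q+1,q+2\}$, where the $i$th row carries the common value $(-1)^{q-i}$ in columns $q+1-i$ and $q+2-i$ (the top row $i=1$ having only the entry in column $q$); and the entries of $\Lambda_q$ are $0$ below the diagonal, $1$ on it, and $2$ strictly above it.

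With these formulas in hand, I would compute the $(i,j)$ entry of $\Gamma_q^T\Lambda_q = \sum_k \gamma_{ki}(\Lambda_q)_{kj}$ directly. The point is that the $i$th column of $\Gamma_q$ meets only the two rows $k_1 = q+1-i$ and $k_2 = q+2-i = k_1+1$, so the sum collapses to at most two terms; moreover $\gamma_{k_1 i} = (-1)^{i-1}$ and $\gamma_{k_2 i} = (-1)^{i}$ carry opposite signs. A short case analysis on the position of $j$ relative to $k_1$ and $k_2$ then finishes the computation: when $j<k_1$ both $\Lambda_q$-entries vanish; when $j=k_1$ the anti-diagonal band meets a $1$ and the super band a $0$; and when $j=k_2$ the anti-diagonal band meets a $2$ and the super band a $1$, the two signs combining to give $2-1=1$. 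In each of the two surviving cases the result is $(-1)^{i-1}$, which one checks reproduces $(-1)^{q+1}\gamma_{ij}$ on the respective anti-diagonal.

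The step I expect to require the most care is the remaining case $j>k_2$, where both bands meet entries of $\Lambda_q$ equal to $2$. Here the identity holds precisely because the opposite signs $(-1)^{i-1}$ and $(-1)^{i}$ force the two contributions to cancel as $2-2=0$, matching the vanishing of $\gamma_{ij}$ off the two anti-diagonals; this is the combinatorial heart of the statement and explains the appearance of the $2$'s in $\Lambda_q$. The only subtlety is the boundary row $i=1$, for which the super band is absent and this cancellation would fail—but there $k_2 = q+1$ and every $j>k_1=q$ lie out of range, so the troublesome case never arises and the verification goes through at the edges as well.
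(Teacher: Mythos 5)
Your proof is correct, and it takes a genuinely different route from the paper for the simple reason that the paper gives no proof of this lemma at all: it is stated as a fact ``well-known amongst those familiar with the congruence canonical form,'' with the reader sent to the Horn--Sergeichuk references. Your argument is thus a self-contained substitute for that citation. The reduction is legitimate: since $\Gamma_q$ vanishes strictly above the anti-diagonal and has $\pm 1$ anti-diagonal entries, it is invertible, and because $\bigl((-1)^{q+1}\bigr)^{-1}=(-1)^{q+1}$, the claimed identity is indeed equivalent to the inverse-free statement $\Gamma_q^{T}\Lambda_q=(-1)^{q+1}\Gamma_q$. The entrywise verification also checks out: writing $k_1=q+1-i$, $k_2=q+2-i$, with $\gamma_{k_1 i}=(-1)^{i-1}$ and $\gamma_{k_2 i}=(-1)^{i}$, the four cases $j<k_1$, $j=k_1$, $j=k_2$, $j>k_2$ yield $0$, $(-1)^{i-1}$, $2(-1)^{i-1}+(-1)^{i}=(-1)^{i-1}$, and $2(-1)^{i-1}+2(-1)^{i}=0$ respectively, which matches $(-1)^{q+1}\gamma_{ij}=(-1)^{q+1}(-1)^{q-i}=(-1)^{i-1}$ on the two anti-diagonal bands and $0$ elsewhere; and your observation that the potentially problematic cancellation-free row $i=1$ never reaches the case $j>k_2$ (since that would force $j>q$) correctly closes the only boundary issue. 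What your approach buys is a short, elementary, fully explicit proof that makes the paper independent of the congruence canonical form literature on this point; what the paper's citation buys is brevity and the placement of the identity in its natural context, namely that $\Gamma_q$ is precisely the canonical congruence block whose cosquare is similar to the Jordan block $J_q\bigl((-1)^{q+1}\bigr)$ -- a perspective your computation verifies but does not explain.
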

	
	We also require the explicit reduction of $\Lambda_q$ to its Jordan canonical form since
	the matrix which implements this similarity will be of great interest to us.  Since we could not find
	the following result in the literature, we feel obliged to provide a detailed proof.  In what follows,
	we let $K_q=I_q+J_q(0)$ and adopt the convention that $\binom{n}{k}=0$
	whenever $k < 0$.
	
	\begin{Lemma}\label{LemmaJordan}
		The matrix $\Lambda_q$ satisfies $\Lambda_q = P_q K_q P_q^{-1},$ where $P_q$ is the
		$q \times q$ upper triangular matrix whose $(i,j)$ entry is given by
		\begin{equation*}
			[P_q]_{i,j} = 2^{q-j}(-1)^{j-i} \binom{j-1}{j-i}.
		\end{equation*}
	\end{Lemma}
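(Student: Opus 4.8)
The plan is to sidestep inverting $P_q$ altogether: since the asserted identity $\Lambda_q = P_q K_q P_q^{-1}$ is equivalent to the product identity
\[
	\Lambda_q P_q = P_q K_q,
\]
I would first establish that $P_q$ is invertible and then verify this product identity entry by entry. Invertibility is immediate: because $\binom{j-1}{j-i}$ vanishes whenever $i>j$, the matrix $P_q$ is upper triangular, and its diagonal entries are $[P_q]_{i,i}=2^{q-i}\binom{i-1}{0}=2^{q-i}\neq 0$. Hence $P_q$ is invertible, and it suffices to confirm the product identity above.

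For the entrywise verification, recall that $\Lambda_q$ has $1$'s on the diagonal, $2$'s strictly above it, and $0$'s below, while $K_q=I_q+J_q(0)$ has $1$'s on the diagonal and superdiagonal. Computing the two sides gives
\[
	[\Lambda_q P_q]_{i,j} = [P_q]_{i,j} + 2\sum_{k=i+1}^{j}[P_q]_{k,j}
	\qquad\text{and}\qquad
	[P_q K_q]_{i,j} = [P_q]_{i,j} + [P_q]_{i,j-1},
\]
where upper-triangularity has been used to truncate the first sum at $k=j$. Both products are upper triangular, so the case $i>j$ is trivial, and after cancelling the common term $[P_q]_{i,j}$ the entire lemma collapses to the single scalar identity
\[
	2\sum_{k=i+1}^{j}[P_q]_{k,j} = [P_q]_{i,j-1},\qquad i\le j.
\]

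Substituting the closed form for the entries of $P_q$ and clearing the common factor $2^{q-j+1}$, this becomes, after the reindexing $m=j-k$,
\[
	\sum_{m=0}^{j-i-1}(-1)^{m}\binom{j-1}{m} = (-1)^{j-i-1}\binom{j-2}{j-i-1}.
\]
This is precisely the partial alternating sum of binomial coefficients, namely $\sum_{m=0}^{M}(-1)^m\binom{N}{m}=(-1)^M\binom{N-1}{M}$, which I would prove by a one-line induction on $M$ using Pascal's rule $\binom{N}{M}=\binom{N-1}{M-1}+\binom{N-1}{M}$. I expect the main obstacle to be purely bookkeeping: keeping the truncation of the sums, the convention $\binom{n}{k}=0$ for $k<0$, and the boundary cases (such as $i=j$ and $j=1$, where the superdiagonal contribution is absent) mutually consistent, so that the telescoping identity applies uniformly across all admissible $i,j$. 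Once this scalar identity is in hand, the product identity $\Lambda_q P_q = P_q K_q$ holds, and the lemma follows at once.
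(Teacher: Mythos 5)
Your proposal is correct and follows essentially the same route as the paper: both verify the equivalent product identity $\Lambda_q P_q = P_q K_q$ entry by entry and reduce it, after cancelling $[P_q]_{i,j}$ and reindexing, to the partial alternating sum $\sum_{m=0}^{t}(-1)^m\binom{n}{m}=(-1)^t\binom{n-1}{t}$, which the paper cites as a known identity and you prove by induction via Pascal's rule. Your explicit remarks on invertibility of $P_q$ and on the boundary cases $i=j$ and $j=1$ are sound and match the conventions the paper uses.
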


        \begin{proof}
		Observe that if $j = 1$, then 
		\begin{equation*}
			[\Lambda_{q}P_{q}]_{i,1} = \delta_{i,1} 2^{q-1}= [ P_{q}K_{q}]_{i,1},
		\end{equation*}
		where $\delta_{i,j}$ denotes the Kronecker $\delta$-function,
		since the matrices involved are all upper-triangular.
		For $j\geq 2$, we make note of Identity 168 in \cite{BenJ}, which states that
		\begin{equation}\label{eq:03}
			\sum_{k=0}^{t}(-1)^{k}\binom{n}{k}=(-1)^{t}\binom{n-1}{t},
		\end{equation}
		whenever $t\geq 0$ and $n\geq 1$. This gives us
		\begin{align}
			[\Lambda_{q}P_{q}]_{i,j}&=\sum_{k=1}^{q}[\Lambda_{q}]_{i,k}[P_{q}]_{k,j} \nonumber \\
			&=\sum_{k=i}^{q}[\Lambda_{q}]_{i,k}[P_{q}]_{k,j} \nonumber\\
			&=[\Lambda_{q}]_{i,i}[P_{q}]_{i,j}+\sum_{k=i+1}^{q}[\Lambda_{q}]_{i,k}[P_{q}]_{k,j} \nonumber\\
			&=[P_{q}]_{i,j}+\sum_{k=i+1}^{q}2\cdot2^{q-j}(-1)^{j-k}\binom{j-1}{j-k} \nonumber\\
			&=[P_{q}]_{i,j}+2^{q-j+1}\sum_{k=i+1}^{j}(-1)^{j-k}\binom{j-1}{j-k} \nonumber\\
			&=[P_{q}]_{i,j}+2^{q-j+1}\sum_{k=0}^{j-i-1}(-1)^{k}\binom{j-1}{k} \label{binomial1}\\
			&=[P_{q}]_{i,j}+2^{q-j+1}(-1)^{j-i-1}\binom{j-2}{j-i-1} \label{binomial2}\\
			&=[P_{q}]_{i,j}[K_{q}]_{j,j}+[P_{q}]_{i,j-1}[K_{q}]_{j-1,j} \nonumber \\
			&=\sum_{k=1}^{q}[P_{q}]_{i,k}[K_{q}]_{k,j} \nonumber\\
			&= [P_{q}K_{q}]_{i,j}, \nonumber
		\end{align}
		where the passage from \eqref{binomial1} to \eqref{binomial2} follows by \eqref{eq:03}. Thus 						         
		$[\Lambda_{q}P_{q}]_{i,j}=[P_{q}K_{q}]_{i,j}$ for each pair $(i,j)$, whence $\Lambda_{q}P_{q}=P_{q}K_{q}$,
		as claimed.
                \end{proof}
                
	We next require an explicit description of the inverse of $P_q$.  For the sake of illustration,
	for $q=5$ we obtain the corresponding matrices
	\begin{equation*}\small
            P_5 =
            \left[
                \begin{array}{ccccc}
                 16 & -8 & 4 & -2 & 1 \\
                 0 & 8 & -8 & 6 & -4 \\
                 0 & 0 & 4 & -6 & 6 \\
                 0 & 0 & 0 & 2 & -4 \\
                 0 & 0 & 0 & 0 & 1
                \end{array}
                \right]
            \qquad
            P_5^{-1} =\frac{1}{16}
                            \left[
                \begin{array}{ccccc}
                 1 & 1 & 1 & 1 & 1 \\
                 0 & 2 & 4 & 6 & 8 \\
                 0 & 0 & 4 & 12 & 24 \\
                 0 & 0 & 0 & 8 & 32 \\
                 0 & 0 & 0 & 0 & 16
                \end{array}
                \right].
	\end{equation*}

        \begin{Lemma}\label{LemmaInvese}
            The inverse $P_q^{-1}$ of the matrix $P_q$ is the $q \times q$
            upper triangular matrix whose $(i,j)$ entry is given by
                \begin{equation}\label{eq-PQI}
	                [P_q^{-1}]_{i,j}= 2^{i-q} \binom{j-1}{j-i}.
                \end{equation}
        \end{Lemma}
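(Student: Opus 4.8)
The plan is to verify directly that the matrix $Q$ with entries $[Q]_{i,j} = 2^{i-q}\binom{j-1}{j-i}$ is a right inverse of $P_q$; since $P_q$ is upper triangular with nonzero diagonal entries $2^{q-j}$, it is invertible, and a right inverse is automatically \emph{the} inverse. Both $P_q$ and $Q$ are upper triangular, so for $i > j$ the product entry $[P_q Q]_{i,j}$ vanishes and agrees with $[I_q]_{i,j}$ automatically. For $i \le j$, the upper-triangularity of the two factors forces the summation index to run over $i \le k \le j$, and I would record the pleasant cancellation of the powers of two: since $[P_q]_{i,k}$ carries a factor $2^{q-k}$ while $[Q]_{k,j}$ carries a factor $2^{k-q}$, these combine to give $2^{0}=1$.

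After this cancellation the computation reduces to the purely combinatorial identity
\begin{equation*}
	\sum_{k=i}^{j} (-1)^{k-i} \binom{k-1}{k-i}\binom{j-1}{j-k} = \delta_{i,j},
\end{equation*}
valid for all $1 \le i \le j \le q$. Establishing this identity is the heart of the matter and the step I expect to be the main obstacle. To prove it I would re-index by $m = k-i$ and set $n = j-i$, rewriting the left-hand side as $\sum_{m=0}^{n}(-1)^{m}\binom{i+m-1}{m}\binom{j-1}{n-m}$, which is a convolution of two explicit sequences.

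The cleanest route is through generating functions. The first factor is the negative-binomial expansion $(1+x)^{-i} = \sum_{m \ge 0}(-1)^{m}\binom{i+m-1}{m}x^{m}$, while the second is the ordinary binomial expansion $(1+x)^{j-1} = \sum_{\ell \ge 0}\binom{j-1}{\ell}x^{\ell}$. Hence the convolution above is precisely the coefficient of $x^{n}$ in the product $(1+x)^{-i}(1+x)^{j-1} = (1+x)^{\,j-1-i}$, namely $\binom{j-1-i}{j-i}$. When $j > i$ this is the coefficient of $x^{n}$ in a polynomial of degree $n-1$, hence $0$; when $j = i$ it is the constant term of $(1+x)^{-1}$, hence $1$. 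This is exactly $\delta_{i,j}$, which completes the verification that $P_q Q = I_q$ and therefore identifies $Q$ as $P_q^{-1}$. As an aside, one could instead derive the finite identity from Identity 168 of \cite{BenJ}, quoted as \eqref{eq:03} above, in the same spirit as the proof of Lemma \ref{LemmaJordan}, but the generating-function argument seems the most transparent.
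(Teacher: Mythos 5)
Your proposal is correct and follows the same overall strategy as the paper: verify entrywise that $P_qQ=I_q$, use upper-triangularity to dispose of the entries below the diagonal, and use the cancellation $2^{q-k}\cdot 2^{k-q}=1$ to reduce the remaining entries to the alternating convolution $\sum_{k=i}^{j}(-1)^{k-i}\binom{k-1}{k-i}\binom{j-1}{j-k}=\delta_{i,j}$. The only divergence is in how that identity is established: the paper re-indexes it into the form $\sum_{k=0}^{n}(-1)^{k-m}\binom{k}{m}\binom{n}{k}=\delta_{m,n}$ and cites it as Identity 169 of \cite{BenJ}, whereas you prove it directly by recognizing the sum as the coefficient of $x^{\,j-i}$ in $(1+x)^{-i}(1+x)^{j-1}=(1+x)^{\,j-i-1}$ via the negative-binomial expansion; this is sound and has the advantage of being self-contained. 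One small correction to your closing aside: the identity the paper invokes at this step is Identity 169 of \cite{BenJ}, not Identity 168 (equation \eqref{eq:03}), which is the partial alternating row sum used in the proof of Lemma \ref{LemmaJordan} and is not the identity needed here.
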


	\begin{proof}
		Letting $P_q'$ denote the $q \times q$ upper triangular matrix whose entries are given by \eqref{eq-PQI},
		we find that $[P_qP_q']_{i,j} = 0$ for $j < i$ since the matrices involved are both upper-triangular.  For $j \geq i$,
		we appeal to Identity 169 in \cite{BenJ}, which we rewrite as	
		\begin{equation*}
			\sum_{k=0}^{n}(-1)^{k-m}\binom{k}{m}\binom{n}{k}=\delta_{m,n}
		\end{equation*}       		
		for $n\geq m$, to conclude that		
		\begin{align*}
			[P_qP_q']_{i,j}&=\sum_{k=1}^q[P_q]_{i,k}[P_q']_{k,j} \nonumber\\
			&=\sum_{k=1}^{q}2^{q-k}(-1)^{k-i}\binom{k-1}{k-i} \cdot 2^{k-q}\binom{j-1}{j-k}\nonumber\\
			&=\sum_{k=1}^{q}(-1)^{k-i}\binom{k-1}{k-i}\binom{j-1}{j-k}	\nonumber\\
			&=\sum_{k=1}^{j}(-1)^{k-i}\binom{k-1}{k-i}\binom{j-1}{j-k}\nonumber\\
			&=\sum_{k=0}^{j-1}(-1)^{k-i+1}\binom{k}{k-i+1}\binom{j-1}{j-k-1}\nonumber\\
			&=\sum_{k=0}^{j-1}(-1)^{k-(i-1)}\binom{k}{i-1}\binom{j-1}{k}\\
			&= \delta_{i-1,j-1} \\
			&= \delta_{i,j}.
		\end{align*}
		Thus $P_q$ is invertible and $P_q^{-1} = P_q'$.
	\end{proof}

	Returning to \eqref{eq-C} and applying Lemma \ref{LemmaJordan} we obtain
	\begin{equation*}
		J_p(0)C^T P_q=\left(-1\right)^{q-1}J_p(0)^TC^T  P_qK_q.
	\end{equation*}
	In other words, the $p \times q$ matrix 
	\begin{equation}\label{eq-YCP}
		Y = C^T P_q
	\end{equation}
	satisfies the equation
	\begin{equation}\label{eq-yc}
		J_p(0) Y =\left(-1\right)^{q-1}J_p(0)^T Y \big( I_q + J_q(0)\big).
	\end{equation}
	Therefore the entries of $Y$ satisfy
	\begin{equation}\label{eq-YYZ}
		{\footnotesize
	        	\begin{bmatrix}
	        		y_{2,1} & y_{2,2} & \cdots & y_{2,q} \\
	        		y_{3,1} & y_{3,2} & \cdots & y_{3,q} \\
	        		\vdots & \vdots & \ddots & \vdots \\
	        		y_{p,1} & y_{p,2} & \cdots & y_{p,q} \\
	        		0&0&\cdots &0
	        	\end{bmatrix}
	        	= (-1)^{q-1}
		\begin{bmatrix}
	        		0&0&\cdots &0 \\
	        		y_{1,1} & y_{1,1} + y_{1,2} & \cdots & y_{1,q-1} + y_{1,q} \\
	        		y_{2,1} & y_{2,1} + y_{2,2} & \cdots & y_{2,q-1} + y_{2,q} \\
	        		\vdots & \vdots & \ddots & \vdots \\
	        		y_{p-1,1} & y_{p-1,1} + y_{p-1,2} & \cdots & y_{p-1,q-1} + y_{p-1,q} \\
	        	\end{bmatrix},}
		\normalsize
	\end{equation}
	from which we see that
	\begin{equation}\label{yij}
		y_{i,j}=(-1)^{q-1}\big(y_{i-2,j-1}+y_{i-2,j}\big)
	\end{equation}
	holds for $3 \leq i \leq p$ and $2 \leq j \leq q$.  This
	prompts the following lemma.

	\begin{Lemma}
		If $i$ is even, then $y_{i,j}=0$ for $1\leq j\leq q$.  If $i$ is odd, then
		\begin{equation}\label{ybinom}
			y_{i,j}
			= (-1)^{\frac{i-1}{2}(q-1)}\sum_{k=1}^j\binom{\frac{i-1}{2}}{j-k}y_{1,k}.
		\end{equation}
	\end{Lemma}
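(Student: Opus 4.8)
The plan is to establish both assertions by a single induction on the row index $i$, treating the two parities in parallel. Before beginning the induction proper, I would extract from the matrix identity \eqref{eq-YYZ} two pieces of boundary data that the recurrence \eqref{yij} does not supply on its own. Comparing the top rows of the two matrices in \eqref{eq-YYZ} gives $y_{2,j}=0$ for every $j$, while comparing their first columns gives the column-one recurrence $y_{i,1}=(-1)^{q-1}y_{i-2,1}$ valid for $3\le i\le p$. Together with \eqref{yij} (which applies for $2\le j\le q$), these relations determine every entry of $Y$ from the first row $y_{1,1},\dots,y_{1,q}$. The even case is then the easy half: since $y_{2,j}=0$ for all $j$, an immediate induction on even $i$ using \eqref{yij} for $j\ge 2$ and the column-one recurrence for $j=1$ shows that the vanishing of row $i-2$ forces the vanishing of row $i$, whence $y_{i,j}=0$ for all even $i$.

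For the odd case I would induct on odd $i$, with base case $i=1$. Here the exponent $\tfrac{i-1}{2}$ equals $0$, and since $\binom{0}{j-k}$ is $1$ when $k=j$ and $0$ otherwise (using the convention $\binom{n}{k}=0$ for $k<0$), the right-hand side of \eqref{ybinom} collapses to $y_{1,j}$, as required. For the inductive step I assume \eqref{ybinom} at the odd index $i-2$ and set $m=\tfrac{i-3}{2}$, so that $\tfrac{i-1}{2}=m+1$. Substituting the two instances of the inductive hypothesis for $y_{i-2,j}$ and $y_{i-2,j-1}$ into \eqref{yij}, the sign prefactors combine as $(-1)^{q-1}(-1)^{m(q-1)}=(-1)^{(m+1)(q-1)}$, which is precisely the prefactor demanded by \eqref{ybinom} at index $i$.

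The one computational kernel is the resulting coefficient bookkeeping. After the index shift that aligns the two summations, the coefficient of $y_{1,k}$ for $1\le k\le j-1$ is $\binom{m}{j-1-k}+\binom{m}{j-k}$, which Pascal's rule converts to $\binom{m+1}{j-k}$; for the endpoint $k=j$ only the $y_{i-2,j}$ term contributes, with coefficient $\binom{m}{0}=1=\binom{m+1}{0}$. Hence the bracketed expression equals $\sum_{k=1}^{j}\binom{m+1}{j-k}y_{1,k}$, which yields \eqref{ybinom} for $i$. The steps requiring care, rather than genuine difficulty, are the separate treatment of the column $j=1$ (where \eqref{yij} is replaced by the column-one recurrence, and where \eqref{ybinom} must be checked to reduce to $(-1)^{\frac{i-1}{2}(q-1)}y_{1,1}$) and the correct handling of the endpoint term $k=j$ when shifting the summation index. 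I expect the Pascal's-rule combination to be the conceptual heart of the argument, but it is entirely routine once the two sums are aligned, so the main obstacle is really just the careful extraction of the correct boundary conditions from \eqref{eq-YYZ} and keeping the sign exponent and binomial indices consistent throughout the induction.
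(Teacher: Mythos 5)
Your proof is correct and follows essentially the same route as the paper's: even rows vanish by propagating $y_{2,j}=0$ through the recurrence, and the odd-row formula follows by induction using \eqref{yij} together with Pascal's rule. The only differences are cosmetic --- you induct on the row index $i$ rather than on $i+j$, and you handle the $j=1$ column explicitly via the separate relation $y_{i,1}=(-1)^{q-1}y_{i-2,1}$ extracted from \eqref{eq-YYZ}, which is if anything slightly more careful than the paper's implicit treatment of that boundary.
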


	\begin{proof}
		Comparing entries in \eqref{eq-YYZ}, we find that $y_{2,j}=0$ for $1\leq j\leq q$.
		In light of \eqref{yij}, it follows that
		\begin{equation*}
			y_{4,j}=(-1)^{q-1} \big(y_{2,j-1}+y_{2,j} \big)=0,
		\end{equation*}
		for $1\leq j\leq q$. Proceeding inductively, we see that $y_{i,j}=0$ whenever $i$ is even.
		
		Now suppose that $i$ is odd.
		For the basis of our induction, observe that \eqref{ybinom} 
		holds trivially when $i+j=2$ (i.e., when $i = j = 1$).  Now suppose that $n \geq 3$ and that
		\eqref{ybinom} holds if $i+j=n-1$.  Under this hypothesis, we wish to show that \eqref{ybinom} also holds if $i+j=n$. 
		In light of \eqref{yij}, we have
		\begin{align}
			y_{i,j}&=(-1)^{q-1} \big(y_{i-2,j-1}+y_{i-2,j} \big)\nonumber\\
			&=(-1)^{q-1}\left[(-1)^{\frac{i-3}{2}(q-1)}\sum_{k=1}^{j-1}{\frac{i-3}{2}\choose j-k-1}y_{1,k}+(-1)^{\frac{i-3}{2}(q-1)}\sum_{k=1}^j{\frac{i-3}{2}\choose j-k}y_{1,k} \right]\nonumber\\
			&=(-1)^{\frac{i-1}{2}(q-1)}\left[\sum_{k=1}^{j-1}{\frac{i-3}{2}\choose j-k-1}y_{1,k}+\sum_{k=1}^j{\frac{i-3}{2}\choose j-k}y_{1,k} \right]\nonumber\\
			&=(-1)^{\frac{i-1}{2}(q-1)}\left[\sum_{k=1}^{j}{\frac{i-3}{2}\choose j-k-1} + {\frac{i-3}{2}\choose j-k} \right]y_{1,k}\nonumber\\
			&=(-1)^{\frac{i-1}{2}(q-1)}\sum_{k=1}^j{\frac{i-1}{2}\choose j-k}y_{1,k}, \nonumber
		\end{align}
		which completes the proof.
	\end{proof}

	Armed with an explicit, entry-by-entry description of the matrix $Y = C^T P_q$, we can construct 
	a convenient basis for the solution space of \eqref{eq-yc}.
	Noting that $Y$ depends upon the $q$ free parameters $y_{1,1}, y_{1,2},\ldots, y_{1,q}$, we introduce a family 
	$W_1,W_2,\ldots,W_q$ of matrices by letting $W_{\ell}$ denote the matrix $Y$ which corresponds to the initial values
	$y_{1,j} = \delta_{j,\ell}$ along the first row and whose remaining entries are determined by \eqref{ybinom}.
	To be more specific, it follows from \eqref{ybinom} that
	\begin{equation}\label{eq:wf}
		[W_{\ell}]_{i,j}=
		\begin{cases}
			0 & \text{if $i$ is even}, \\[5pt]
			\displaystyle (-1)^{\frac{i-1}{2}(q-1)} \binom{\frac{i-1}{2}}{j-\ell} & \text{if $i$ is odd}.
		\end{cases}
	\end{equation}
	In general, for $i$ odd we have
	\begin{align*}
                [W_{\ell}]_{i,j} &=(-1)^{\frac{i-1}{2}(q-1)}\binom{\frac{i-1}{2}}{j-\ell} 
                = (-1)^{\frac{i-1}{2}(q-1)}\binom{\frac{i-1}{2}}{(j-\ell+1)-1}\\
                &=[W_1]_{i,j-(\ell-1)}
                =\left[W_1\big(J_q(0)\big)^{\ell-1}\right]_{i,j},
	\end{align*}
	so that
	\begin{equation}\label{Wm}
                W_{\ell}=W_1\big(J_q(0)\big)^{\ell-1}.
	\end{equation}	
	For example, if $p=q=5$ we obtain
	\begin{equation*}
		Y=\small
		\begin{bmatrix}
	                    y_{1,1}&y_{1,2}&y_{1,3}&y_{1,4}&y_{1,5}\\
	                    0&0&0&0&0\\
	                    y_{1,1}&y_{1,1}+y_{1,2}&y_{1,2}+y_{1,3}&y_{1,3}+y_{1,4}&y_{1,4}+y_{1,5}\\
	                    0&0&0&0&0\\
	                    y_{1,1}&2y_{1,1}+y_{1,2}&y_{1,1}+2y_{1,2}+y_{1,3}&y_{1,2}+2y_{1,3}+y_{1,4}&y_{1,3}+2y_{1,4}+y_{1,5}\\
		\end{bmatrix},
	\end{equation*}
	and
	\begin{equation*}\small
                 W_1=
                    \begin{bmatrix}
                    1&0&0&0&0\\
                    0&0&0&0&0\\
                    1&1&0&0&0\\
                    0&0&0&0&0\\
                    1&2&1&0&0\\
                    \end{bmatrix},  \quad
                 W_2=
                    \begin{bmatrix}
                    0&1&0&0&0\\
                    0&0&0&0&0\\
                    0&1&1&0&0\\
                    0&0&0&0&0\\
                    0&1&2&1&0\\
                    \end{bmatrix},  \quad
                  W_3=
                    \begin{bmatrix}
                    0&0&1&0&0\\
                    0&0&0&0&0\\
                    0&0&1&1&0\\
                    0&0&0&0&0\\
                    0&0&1&2&1\\
                    \end{bmatrix},
                \end{equation*}
                \begin{equation*}\small
                 W_4=
                    \begin{bmatrix}
                    0&0&0&1&0\\
                    0&0&0&0&0\\
                    0&0&0&1&1\\
                    0&0&0&0&0\\
                    0&0&0&1&2\\
                    \end{bmatrix}, \quad
                W_5=
                    \begin{bmatrix}
                    0&0&0&0&1\\
                    0&0&0&0&0\\
                    0&0&0&0&1\\
                    0&0&0&0&0\\
                    0&0&0&0&1\\
                    \end{bmatrix}.
	\end{equation*}

	By \eqref{eq:wf} we can see that $[W_i]_{1,j}=\delta_{i,j}$ for $1\leq i,j\leq q$ whence
	the matrices $W_1,W_2,\ldots,W_q$ are linearly independent.  Considering \eqref{Wm}, 
	we now set $W=W_1$ so that for any solution $Y$ of \eqref{eq-yc} we have
	\begin{equation}\label{Y}
		Y=\sum_{\ell=1}^{q}\alpha_{\ell}W_{\ell}=W\left(\sum_{\ell=1}^{q}\alpha_{\ell}\big(J_q(0)\big)^{\ell-1}\right)
	\end{equation}
	for some constants $\alpha_{1},\alpha_{2},\hdots,\alpha_{q}$.  Moreover, every solution to \eqref{eq-yc} arises in this manner.
	
	Returning to \eqref{eq-YCP} and solving for $C$ yields
	\begin{equation}\label{C}
		\boxed{ C =P_{q}^{-T}\left(\sum_{\ell=1}^{q}\alpha_{\ell}\big(J_q^T(0)\big)^{\ell-1}\right)W^{T},}
	\end{equation}
	where $P_q^{-1}$ is given explicitly by \eqref{eq-PQI} and $W = W_1$ is given by \eqref{eq:wf}.
	Turning our attention now toward \eqref{eq-solveB}, we find that
	\begin{equation}\label{eq-BJCT}
		\boxed{B=-J_p(0)^TC^T\Gamma_q^{-T},}
	\end{equation}
	so that $B$ is completely determined by $C$. In light of \eqref{Y}, the dimension 
	of the solution space of \eqref{eq-BC1} and \eqref{eq-BC2} is $q$, which is consistent with \cite[Lem.~10]{DTD}.
	Furthermore, \cite[Lem.~4]{DTD} and \cite[Lem.~5]{DTD} tell us that the solution sets 
	of  \eqref{eq-A} and \eqref{eq-D} have dimensions $\lceil \frac{p}{2}\rceil$ and $\lfloor \frac{q}{2}\rfloor$, respectively.
	Putting everything together, we conclude that the dimension of the solution space of our original equation 
	\eqref{eq-Main} for odd $p$ is
	\begin{equation*}
                    q+\left\lceil \frac{p}{2}\right\rceil+ \left\lfloor \frac{q}{2}\right\rfloor. 
	\end{equation*}
 
\section{Combinatorial interpretation}
	Although we are now in possession of an explicit description of the solution to \eqref{eq-Main},
	it is worth examining the submatrices $B$ and $C$
	a little bit closer since the entries of these matrices
	display some remarkable combinatorial properties.
	
	Using our closed form solution \eqref{C}, let us continue with our previous series of examples
	corresponding to the parameters $p=q=5$.  In this setting, we have
	\begin{equation*}
	        	C=\tiny
	        	\begin{bmatrix}
	        	 \frac{\alpha_1}{16} & 0 & \frac{\alpha_1}{16} & 0 & \frac{\alpha_1}{16} \\[3pt]
	        	 \frac{1}{16} \left(\alpha_1+2 \alpha_2\right) & 0 & \frac{1}{16} \left(3 \alpha_1+2 \alpha_2\right) & 0 & \frac{1}{16} \left(5 \alpha_1+2 \alpha_2\right) \\[3pt]
	        	 \frac{1}{16} \left(\alpha_1+4 \left(\alpha_2+\alpha_3\right)\right) & 0 & \frac{1}{16} \left(5 \alpha_1+8 \alpha_2+4 \alpha_3\right) & 0 & \frac{1}{16} \left(13 \alpha_1+4 \left(3 \alpha_2+\alpha_3\right)\right) \\[3pt]
	        	 \frac{1}{16} \left(\alpha_1+6 \alpha_2+12 \alpha_3+8 \alpha_4\right) & 0 & \frac{1}{16} \left(7 \alpha_1+18 \alpha_2+20 \alpha_3+8 \alpha_4\right) & 0 & \frac{1}{16} \left(25 \alpha_1+38 \alpha_2+28 \alpha_3+8 \alpha_4\right) \\[3pt]
	        	 \frac{\alpha_1}{16}+\frac{\alpha_2}{2}+\frac{3 \alpha_3}{2}+2 \alpha_4+\alpha_5 & 0 & \frac{9 \alpha_1}{16}+2 \alpha_2+\frac{7 \alpha_3}{2}+3 \alpha_4+\alpha_5 & 0 & \frac{41 \alpha_1}{16}+\frac{11 \alpha_2}{2}+\frac{13 \alpha_3}{2}+4 \alpha_4+\alpha_5
	        	\end{bmatrix}.
	\end{equation*}
	In particular, the simple parametrization at the heart of \eqref{C} is obfuscated 
	by pre- and post- multiplication
	with the somewhat complicated matrices $P_q^{-T}$ and $W^T$, respectively.  
	This results in the rather overwhelming complexity apparent in the preceding example.
	However, upon performing an appropriate reparametrization, we can obtain the significantly 
	simpler matrix
	\begin{equation}\label{Cex}       	
        	C=\small
	        	\begin{bmatrix}
		        	 \beta_1 & 0 & \beta_1 & 0 & \beta_1 \\
		        	 \beta_2 & 0 & \beta_2+2\beta_1 & 0 & \beta_2+4 \beta_1 \\
		        	 \beta_3 & 0 & \beta_3 +2\beta_2+2\beta_1& 0 & \beta_3+4 \beta_2+8 \beta_1 \\
		        	 \beta_4 & 0 & \beta_4 +2\beta_3 +2\beta_2+2\beta_1& 0 & \beta_4+4 \beta_3+8 \beta_2+12 \beta_1 \\
		        	 \beta_5 & 0 & \beta_5+2\beta_4 +2\beta_3 +2\beta_2+2\beta_1 & 0 & \beta_5+4 \beta_4+8 \beta_3+12 \beta_2+16 \beta_1
	        	\end{bmatrix}.
	\end{equation}	 
	We sketch here an independent combinatorial argument
	for determining the exact coefficients which arise in this manner.  Since our motivation for doing so
	is purely aesthetic, we leave many of the tedious details to the reader.

	We begin anew, expanding \eqref{eq-C} and using Lemma \ref{LemmaCosquare} to obtain
	\begin{equation}\footnotesize\label{eq-Recursion}
		\begin{bmatrix}
			c_{1,2} & c_{2,2} &\cdots & c_{q,2} \\
			c_{1,3} & c_{2,3} &\cdots & c_{q,3} \\
			\vdots & \vdots & \ddots & \vdots \\
			c_{1,p} & c_{2,p} & \cdots & c_{q,p} \\
			0 & 0 & \cdots & 0
		\end{bmatrix}
		=
		(-1)^{q-1}\begin{bmatrix}
	 		0 & 0  & \cdots & 0 \\
	 		c_{1,1} & c_{2,1} + 2c_{1,1}& \cdots & c_{q,1}+\sum_{i=1}^{q-1}2c_{i,1} \\
	 		c_{1,2} & c_{2,2}+2c_{1,2} & \cdots & c_{q,2}+\sum_{i=1}^{q-1}2c_{i,2} \\
	 		\vdots & \vdots & \ddots & \vdots \\
	 		c_{1,p-1} & c_{2,p-1}+2c_{1,p-1} & \cdots & c_{q,p-1}+\sum_{i=1}^{q-1}2c_{i,p-1}
		\end{bmatrix}.
	\end{equation}
	Since there are no restrictions placed upon the first column of $C$, we  set 
	\begin{equation*}
		c_{1,1} = \beta_1, \quad
		c_{2,1} = \beta_2,
		\ldots, \quad c_{q,1} = \beta_q,
	\end{equation*}
	where $\beta_1,\beta_2,\ldots,\beta_q$ are free parameters (for our later convenience, we define $c_{i,1}= \beta_i = 0$ for $i > q$).  
	Moreover, it is also clear that 
	\begin{equation}\label{eq-Zeros}
		c_{1,2} = c_{2,2} = \cdots = c_{q,2}=0.
	\end{equation}
	Along similar lines, we set $c_{i,2}=0$ for $i > q$.
	The remaining $c_{i,j}$ can be obtained from the recursion
	\begin{equation}\label{eq-CR}
		c_{i,j} = (-1)^{q-1}\big(c_{i,j-2}+\sum_{\ell=1}^{i-1}2c_{\ell,j-2}\big),
	\end{equation}
	suggested by \eqref{eq-Recursion}.
	Among other things, the initial conditions \eqref{eq-Zeros} and the recursion
	\eqref{eq-CR} tell us that $c_{i,j} = 0$
	whenever $j$ is even.  Noting that 
	\begin{equation*}
		\frac{1+x}{1-x} = 1 + 2x + 2x^2 + 2x^3 + \cdots,
	\end{equation*}
	we define the multivariate generating function 
	$f(x,y) = \sum_{i,j=1}^{\infty} c_{i,j} x^i y^j$
	and use \eqref{eq-CR} to obtain the identity
	\begin{equation*}
		f(x,y) = y h(x) + (-1)^{q-1} y^2 \left( \frac{1+x}{1-x}\right) f(x,y),
	\end{equation*}
	where
	\begin{equation*}
		h(x) = \sum_{n=1}^q \beta_n x^n.
	\end{equation*}
	In other words, $c_{i,j}$ equals the coefficient of $x^i y^j$ in the series expansion of
	\begin{equation}\label{eq-fxy}
		f(x,y) = \frac{yh(x)}{1 - (-1)^{q-1} \left( \frac{1+x}{1-x} \right)y^2}.
	\end{equation}
	
	At this point we take a moment to recall that
	\begin{equation}\label{eq-GF1}
		\left(\frac{1+x}{1-x}\right)^k=\sum_{n=0}^\infty S_k(n)x^n,
	\end{equation}
	where $S_k(n)$ denotes the coordination sequence 
	for a $k$-dimensional cubic lattice (see Table \ref{TableSequence}). 
	To be more specific, $S_k(n)$ represents the 
	number of lattice points in $\mathbb{Z}^k$ at distance $n$ from the origin. 
	For instance, the first row of Table \ref{TableSequence} is $1,0,0,\ldots$ since the trivial lattice 
	contains only the origin.  The second row of Table \ref{TableSequence} is $1,2,2,\ldots$ 
	since the lattice $\Z^1 = \Z$
	contains exactly one point at distance one from $0$ (namely $0$ itself), 
	and exactly two points at distance $n$ from
	$0$ (namely $\pm n$) for $n \geq 1$.  See \cite{Conway} for more information about the numbers $S_k(n)$.
	\begin{table}
	\begin{equation*}
		\small
		\begin{array}{|c||c|c|c|c|c|c|c|c|c|c|}
			\hline
			n&0&1&2&3&4&5&6&7&8&9\\
			\hline
			\hline
			S_0(n)&1&0&0&0&0&0&0&0&0&0\\
			\hline
			S_1(n)&1&2&2&2&2&2&2&2&2&2\\
			\hline
			S_2(n)&1&4&8&12&16&20&24&28&32&36\\
			\hline
			S_3(n)&1&6&18&38&66&102&146&198&258&326\\
			\hline
			S_4(n)&1&8&32&88&192&360&608&952&1408&1992\\
			\hline
			S_5(n)&1&10&50&170&450&1002&1970&3530&5890&9290\\
			\hline
			S_6(n)&1&12&72&292&912&2364&5336&10836&20256&35436\\
			\hline
			S_7(n)&1&14&98&462&1666&4942&12642&28814&59906&115598\\
			\hline
			S_8(n)&1&16&128&688&2816&9424&27008&68464&157184&332688\\
			\hline
			S_{9}(n)&1&18&162&978&4482&16722&53154&148626&374274&864146\\
			\hline
			S_{10}(n)&1&20&200&1340&6800&28004&97880&299660&822560&2060980\\
			\hline
		\end{array}
	\end{equation*}
	\caption{The initial terms of the sequences $S_0(n), S_1(n),\ldots, S_{10}(n)$.    }
	\label{TableSequence}
	\end{table}
	
	Returning to \ref{eq-fxy}, we have
	\begin{align*}
	f(x,y)
	&= yh(x) \sum_{k=0}^{\infty} (-1)^{k(q-1)} \left( \frac{1+x}{1-x} \right)^k y^{2k} \\
	&= \sum_{k=0}^{\infty} (-1)^{k(q-1)} \left(\sum_{r=1}^q \beta_r x^r \sum_{n=0}^\infty S_k(n)x^n \right)y^{2k+1} ,
	\end{align*}
	from which we deduce that the entries of $C$ satisfy
	\begin{equation}\label{eq-cijexplicit}
		c_{i,j}=
		\begin{cases}
		0 & \text{if $j$ is even},\\[5pt] \displaystyle
		(-1)^{\floor{ \frac{j}{2}}(q-1)}\sum_{\ell=0}^{i-1} S_{\lfloor{\frac{j}{2}}\rfloor}(\ell)\beta_{i-\ell}
		& \text{if $j$ is odd}.
		\end{cases}
	\end{equation}

	Since $B=-J_p(0)^TC^T\Gamma_q^{-T}$, we can readily treat $B$ as well.  
	Evaluating this matrix equation entry-by-entry tells us that
	\begin{equation*}
		b_{1,1} = b_{1,2} = \cdots = b_{1,q} = 0
	\end{equation*}
	and
	\begin{equation}\label{eq-bg}
		b_{i,j} = (-1)^q   \Big( c_{j,i-1} + 2\sum_{\ell=1}^{q-j} (-1)^{\ell} c_{j+\ell,i-1} \Big)
	\end{equation}
	for $i\geq 2$.  From \eqref{eq-cijexplicit} and \eqref{eq-bg} we see that $b_{i,j}=0$ whenever $i$ is odd.
	In fact, one can use the recursion \eqref{eq-bg} and the formula \eqref{eq-fxy} for the generating
	function $f(x,y)$ to show that $b_{i,j}$ equals the coefficient of $x^i y^j$ in the series
	expansion of 
	\begin{equation*}
		g(x,y) 
		= (-1)^{q-1} \frac{xf(-y,x)}{y^{q+1}(1+y)}.
	\end{equation*}
	For the even indexed rows of $B$, we may appeal to 
	the generating function identity
	\begin{equation*}
		\left(\frac{1+x}{1-x}\right)^k\cdot\frac{1}{1-x} =\left(\sum_{n=0}^{\infty}S_k(n)x^n\right)
		    \left(\sum_{n=0}^{\infty}x^n\right)
		=\sum_{n=0}^{\infty}G_k(n)x^n,
	\end{equation*}
	where $G_k(n) = \sum_{\ell=0}^n S_k(\ell)$ denotes the number of lattice points in 
	$\mathbb{Z}^k$ at distance $\leq n$ from the origin (see Table \ref{TableG}), to eventually obtain
	\begin{table}
		\begin{equation*}
			\small
			\begin{array}{|c||c|c|c|c|c|c|c|c|c|c|}
				\hline
				n= &0&1&2&3&4&5&6&7&8&9\\
				\hline
				\hline
				G_0(n)&1&1&1&1&1&1&1&1&1&1\\
				\hline
				G_1(n)&1&3&5&7&9&11&13&15&17&19\\
				\hline
				G_2(n)&1&5&13&25&41&61&85&113&145&181\\
				\hline
				G_3(n)&1&7&25&63&129&231&377&575&833&1159\\
				\hline
				G_4(n)&1&9&41&129&321&681&1289&2241&3649&5641\\
				\hline
				G_5(n)&1&11&61&231&681&1683&3653&7183&13073&22363\\
				\hline
				G_6(n)&1&13&85&377&1289&3653&8989&19825&40081&75517\\
				\hline
				G_7(n)&1&15&113&575&2241&7183&19825&48639&108545&224143\\
				\hline
				G_8(n)&1&17&145&833&3649&13073&40081&108545&265729&598417\\
				\hline
				G_{9}(n)&1&19&181&1159&5641&22363&75517&224143&598417&1462563\\
				\hline
				G_{10}(n)&1&21&221&1561&8361&36365&134245&433905&1256465&3317445\\
				\hline
			\end{array}
		\end{equation*}
		\caption{The initial terms of the sequences $G_0(n), G_1(n),\ldots, G_{10}(n)$. }
		\label{TableG}
	\end{table}
\begin{equation*}
	b_{i,j}=
	\begin{cases}
		0 &\text{if $i$ is odd}, \\[5pt]
		(-1)^{j+(k+1)(q-1)} \displaystyle\sum_{\ell=0}^{q-j}G_{k-1}(\ell)\beta_{q-j-\ell+1} &\text{if $i=2k$}.
	\end{cases}
\end{equation*}
For example when $p=7$ and $q=5$, we have
\begin{equation*}
	B=
	\tiny
	\begin{bmatrix}
	 0 & 0 & 0 & 0 & 0 \\
	 -\beta _1-\beta _2-\beta _3-\beta _4-\beta _5 & \beta _1+\beta _2+\beta _3+\beta _4 & -\beta _1-\beta _2-\beta _3 & \beta _1+\beta _2 & -\beta _1 \\
	 0 & 0 & 0 & 0 & 0 \\
	 -9 \beta _1-7 \beta _2-5 \beta _3-3 \beta _4-\beta _5 & 7 \beta _1+5 \beta _2+3 \beta _3+\beta _4 & -5 \beta _1-3 \beta _2-\beta _3 & 3 \beta _1+\beta _2 & -\beta _1 \\
	 0 & 0 & 0 & 0 & 0 \\
	 -41 \beta _1-25 \beta _2-13 \beta _3-5 \beta _4-\beta _5 & 25 \beta _1+13 \beta _2+5 \beta _3+\beta _4 & -13 \beta _1-5 \beta _2-\beta _3 & 5 \beta _1+\beta _2 & -\beta _1 \\
	 0 & 0 & 0 & 0 & 0 \\
	\end{bmatrix}.
\end{equation*}

\bibliographystyle{amsplain}
\bibliography{OMEXAAX2}

\end{document}